 \numberwithin{equation}{section}
 \definecolor{db}{rgb}{0.0,0.0,0.8} 
\definecolor{dg}{rgb}{0.0,0.55,0.14}
\definecolor{dr}{rgb}{0.5,0,0.07}
\newtheorem{theorem}{Theorem}[section]
\newtheorem{proposition}[theorem]{Proposition}
\newtheorem{lemma}[theorem]{Lemma}
\newtheorem{corollary}[theorem]{Corollary}
\theoremstyle{definition}
\theoremstyle{definition}
\theoremstyle{definition}
\theoremstyle{definition}
\theoremstyle{definition}
\theoremstyle{definition}
\newtheorem{remark}{Remark}
\theoremstyle{definition}
\newtheorem{open-problem}{Open Problem}
\newcounter{step}
\newcommand{\rlemma}[1]{Lemma~\ref{#1}}
\newcommand{\rth}[1]{Theorem~\ref{#1}}
\newcommand{\rcor}[1]{Corollary~\ref{#1}}
\newcommand{\rprop}[1]{Proposition~\ref{#1}}
\def\be{\begin{equation}}
\def\ee{\end{equation}}
\def\bes{\begin{equation*}}
\def\ees{\end{equation*}}
\def\bt{\begin{theorem}}
\def\et{\end{theorem}}
\def\bpr{\begin{proposition}}
\def\epr{\end{proposition}}
\def\bl{\begin{lemma}}
\def\el{\end{lemma}}
\def\bc{\begin{corollary}}
\def\ec{\end{corollary}}
\def\br{\begin{remark}}
\def\er{\end{remark}}
\def\ben{\begin{enumerate}}
\def\bena{\begin{enumerate}[a)]}
\def\een{\end{enumerate}}
\def\bit{\begin{itemize}}
\def\iit{\end{itemize}}
\def\supp{\operatorname{supp}}
\def\dist{\operatorname{dist}}
\def\sgn{\operatorname{sgn}}
 \newcommand{\Prod}{\mathop{\prod}\limits}
\DeclareMathAlphabet{\mathonebb}{U}{bbold}{m}{n}
\def\R{{\mathbb R}}
\def\N{{\mathbb N}}
\def\st{{\mathbb S}^2}
\def\SNO{\omega_{N-1}}
\def\H1O{H^1(\Omega;\st)}
\def\dist{\operatorname{dist}}
\def\Dist{\operatorname{Dist}}
\def\DistH1{\Dist_{H^1}}
\def\distH1{\dist_{H^1}}
\def\sgn{\operatorname{sgn}}
\newcommand{\tu}{\widetilde u}
\newcommand{\tv}{\widetilde v}
\date{\today}
\title{The best constant in the embedding of $W^{N,1}(\R^N)$ into $L^\infty(\R^N)$}
\author{Itai Shafrir \thanks{Department of Mathematics, Technion - I.I.T., 32000 Haifa, Israel. Email address: shafrir$@$math.technion.ac.il}}
\newcommand\blfootnote[1]{%
  \begingroup
  \renewcommand\thefootnote{}\footnote{#1}%
  \addtocounter{footnote}{-1}%
  \endgroup
}
\begin{document}
\maketitle

\begin{abstract}
We compute the best constant in the embedding of $W^{N,1}(\R^N)$ into
$L^\infty(\R^N)$, extending a result of Humbert and Nazaret in
dimensions one and two to
 any $N$. The main tool is the identification of $\log |x|$
as a fundamental solution of a certain elliptic operator of order
$2N$.  
\blfootnote{\emph{Keywords:} Best constant, Sobolev embedding}
\blfootnote{\emph{2010 Mathematics Subject Classification.} Primary
  46E35; Secondary  35A23.}
\end{abstract}
\section{Introduction}
It is well known that the space $W^{N,1}(\R^N)$ is continuously
embedded in $L^\infty(\R^N)$. Actually, it is embedded in $C_0(\R^N)$ -- the subspace of $C(\R^N)$ consisting of
functions satisfying $\lim_{|x|\to\infty} u(x)=0$. This follows from  the density of $C^\infty_c(\R^N)$ in
$W^{N,1}(\R^N)$ and the inequality 
\begin{equation}
  \label{eq:60}
  \|u\|_\infty\leq C\int_{\R^N}|\nabla^N u|\,dx\,,~\forall u\in C^\infty_c(\R^N)\,,
\end{equation}
see e.g.~Brezis~\cite[Remark 13, Chapter 9]{br}. 
 Above we used the notation
 \begin{equation}
   \label{eq:61}
   \nabla^N u=\left\{\frac{\partial ^Nu}{\partial
       x_{i_1}\cdots\partial x_{i_N}}\right\}_{i_1,\ldots,i_N\in I_N},
 \end{equation}
 where $I_N:=\{1,\ldots,N\}$ (so that $\nabla^N u$ is a tensor of size
 $N^N$) and
\begin{equation}
  \label{eq:62}
  |\nabla^N u|=\left\{\sum_{i_1,\ldots,i_N\in I_N}\left(\frac{\partial ^Nu}{\partial x_{i_1}\cdots\partial x_{i_N}}\right)^2\right\}^{1/2}\,.
\end{equation}
It is natural to look for the optimal constant $C$ in \eqref{eq:60} (i.e., the
smallest constant for which the inequality holds). Of course it is
equivalent to consider the inequality for either $u\in
C^\infty_c(\R^N)$ or $u\in W^{N,1}(\R^N)$. Following Humbert and Nazaret~\cite{hn}
we denote the optimal  constant in \eqref{eq:60} by $K_N$. The
constant $K_N$ played an important role in their study of best
constants in the inequality
\begin{equation}
  \label{eq:7}
  \|u\|_\infty\leq A \int_M |\nabla^N_g u|_g\,dv_g+B\|u\|_{W^{N-1,1}(M)}\,,\quad\forall u\in C^\infty(M),
\end{equation}
 for a smooth compact Riemannian $N$--manifold $(M,g)$. In 
 \cite{hn} they computed the
value of $K_N$ for $N=1,2$, and left open the question of computing its
value for $N\geq 3$. We answer this question
in the current paper. Our argument is valid for every $N\geq1$.
\begin{remark}
The particular choice of the norm as  in \eqref{eq:62} is important,
since as we shall see below, it is invariant with respect to the
orthogonal group $O(N)$. 
\end{remark}
In order to prescribe the value of
the best constant we recall (see Morii, Sato and Sawano~\cite{mss})
the following property of  the function $\log |x|$:
\begin{equation}
  \label{eq:63}
  |\nabla^N \log |x||=\frac{\sqrt{\ell_N}}{|x|^N},
\end{equation}
for some positive constant $l_N$ (see also \rcor{cor:rad} below). Its
explicit (and complicated!) value
was calculated in \cite{mss} (see Remark~\ref{rem:explicit}
below). Our main result is the following ($\omega_m$ denotes the surface area of
 the unit sphere ${\mathbb S}^m$):
\begin{theorem}
  \label{th:main}
 (i) The value of $K_N$ is  $\left(\sqrt{l_N}\,\SNO\right)^{-1}$. That is, 
 \begin{equation}
   \label{eq:64}
   \|u\|_\infty\leq \left(\sqrt{l_N}\,\SNO\right)^{-1}\int_{\R^N}|\nabla^N
   u|\,dx\,,\quad \forall u\in W^{N,1}(\R^N),
 \end{equation}
  and one cannot replace $\left(\sqrt{l_N}\,\SNO\right)^{-1}$ by a smaller
  constant. \\
(ii) Moreover, for $N\geq 2$ there is no function in
  $W^{N,1}(\R^N)$ (except the zero function) for which equality holds
  in \eqref{eq:64}. 
\end{theorem}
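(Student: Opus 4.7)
The plan is to follow the hint and identify $\log|x|$ as the fundamental solution of the $O(N)$-invariant elliptic operator of order $2N$
\[ L u := \sum_{i_1,\ldots,i_N \in I_N} \partial_{i_1}\cdots\partial_{i_N}\bigl(|x|^N\,\partial_{i_1}\cdots\partial_{i_N} u\bigr). \]
The key lemma is that $L(\log|x|) = c_N \delta_0$ in $\mathcal{D}'(\R^N)$ with $|c_N| = l_N\,\omega_{N-1}$. Since by \eqref{eq:63} the tensor $|x|^N \nabla^N \log|x|$ has constant pointwise norm $\sqrt{l_N}$ and is homogeneous of degree $0$, and since both $L$ and $\log|x|$ are $O(N)$-invariant, the distribution $L(\log|x|)$ is $O(N)$-invariant and $(-N)$-homogeneous on $\R^N\setminus\{0\}$. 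Hence it coincides there with $c_0/|x|^N$ for some constant $c_0$, and a direct tensor computation using the Leibniz rule shows $c_0 = 0$. The $\delta_0$-coefficient is then extracted from the integration-by-parts identity
\[ \int L(\log|x|)\,\phi\,dx = (-1)^N \int_{\R^N} |x|^N \nabla^N \log|x| : \nabla^N \phi\,dx,\quad \phi \in C^\infty_c(\R^N), \]
by evaluating the right-hand side on a radial $\phi$: the integrand decomposes, in polar coordinates, into a linear combination of $\phi^{(k)}(r)/r^{N-k}$ ($1 \le k \le N$), and iterated radial IBP telescopes the resulting one-dimensional integral into $\pm l_N\,\omega_{N-1}\,\phi(0)$.

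Granting the lemma, the pointwise tensor Cauchy--Schwarz inequality applied to $T := |x|^N \nabla^N \log|x|$ and $S := \nabla^N u$ yields, for each $u \in C^\infty_c(\R^N)$,
\[ |u(0)| = \frac{1}{l_N\,\omega_{N-1}}\left|\int T : S\,dx\right| \le \frac{\sqrt{l_N}}{l_N\,\omega_{N-1}} \int |\nabla^N u|\,dx = \frac{1}{\sqrt{l_N}\,\omega_{N-1}}\int |\nabla^N u|\,dx. \]
Translation invariance and the density of $C^\infty_c(\R^N)$ in $W^{N,1}(\R^N)$ then deliver \eqref{eq:64}. For optimality, I would use the family $u_n(x) := \phi_n(-\log|x|)$, where $\phi_n\in C^\infty(\R;[0,1])$ is a smoothing of the piecewise linear function that is $0$ on $(-\infty,0]$, has slope $1/\log n$ on $[0,\log n]$, and equals $1$ on $[\log n,\infty)$. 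On the annulus $1/n \le |x| \le 1$ one has $u_n \approx -\log|x|/\log n$, hence $\nabla^N u_n \approx -\nabla^N \log|x|/\log n$, and a polar-coordinate computation gives $\int |\nabla^N u_n|\,dx \to \sqrt{l_N}\,\omega_{N-1}$ while $\|u_n\|_\infty = 1$, saturating the bound.

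For (ii), suppose $u \in W^{N,1}(\R^N)\setminus\{0\}$ realizes equality in \eqref{eq:64}. Pointwise equality in Cauchy--Schwarz forces $\nabla^N u(x) = \mu(x)\,\nabla^N \log|x|$ a.e., with $\mu$ of constant sign. Differentiating and using the full symmetry of $\nabla^{N+1} u$ in its $N+1$ indices, the resulting identity
\[ \partial_{i_0}\mu\cdot\partial_{i_1}\cdots\partial_{i_N}\log|x| = \partial_{i_1}\mu\cdot\partial_{i_0}\partial_{i_2}\cdots\partial_{i_N}\log|x| \]
(valid for every choice of indices) is an over-determined system in $\nabla \mu$. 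For $N \ge 2$ the components of $\nabla^N\log|x|$ are sufficiently non-degenerate as rational functions of $\hat x = x/|x|$ that it forces $\nabla \mu \equiv 0$, so $\mu$ is a constant $c$. Consequently $\nabla^N(u - c\log|x|) \equiv 0$, whence $u = c\log|x| + p$ for a polynomial $p$ of degree $<N$. Since $W^{N,1}(\R^N) \hookrightarrow C_0(\R^N)$ requires $u$ bounded and vanishing at infinity, both $c = 0$ and $p \equiv 0$, contradicting $u \not\equiv 0$.

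The main obstacle is proving the key lemma for general $N$: the vanishing of the classical part $c_0$ and the explicit evaluation of $c_N$ both reduce to combinatorial computations involving $\nabla^N \log|x|$ and the weight $|x|^N$, and handling these cleanly in arbitrary dimension is the technical heart of the argument.
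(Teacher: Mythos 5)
Your proposal follows the same global architecture as the paper --- identify $\log|x|$ as a fundamental solution of the $2N$-th order operator, run Cauchy--Schwarz off the representation formula, and saturate the bound with a logarithmic profile --- and the upper bound and the optimizing family in part~(i) match the paper's. However, you explicitly defer the core difficulty: showing that the smooth part $c_0/|x|^N$ vanishes, identifying the singular part as a pure multiple of $\delta_0$ (rather than some $D^\alpha\delta_0$ with $|\alpha|>0$), and computing the coefficient. You propose ``direct tensor computation'' and ``combinatorial computations,'' which is exactly what the paper's proof of Proposition~\ref{prop:prop} is designed to avoid. The paper shows $c_0=0$ by contradiction: it builds cut-off functions $v_\delta$ with $\int|\nabla^N v_\delta|$ uniformly bounded, pairs the operator against $v_\delta$, and observes that $c_0\neq 0$ would force a $\log(1/\delta)$ divergence, contradicting \eqref{eq:13}. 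The singular part is handled via Schwartz's structure theorem for distributions supported at a point, combined with the observation that $\mathcal{F}$ is a sum of first-order distributional derivatives of functions bounded by $C/|x|^{N-1}$ (hence $L^1_{\mathrm{loc}}$), which rules out derivatives of $\delta_0$ of order $\ge 1$. The coefficient is then computed by testing against a single explicit family $u_\varepsilon\approx\log(1/|x|)$, which collapses the pairing to $\int_{\{\varepsilon<|x|<1\}}|x|^{-N}$. These soft arguments replace all the combinatorics; they are the content of the paper, not a routine computation to be filled in afterwards.

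For part~(ii) your argument takes a genuinely different route (no radialization), but it has two gaps. First, you differentiate the proportionality relation $\nabla^N u=\mu\,\nabla^N\log|x|$ once more; for $u\in W^{N,1}$ the $(N+1)$-st derivatives exist only distributionally, and pairing the distributional $\nabla\mu$ against the singular factor $\partial_{i_1}\cdots\partial_{i_N}\log|x|$ is not a priori meaningful. Second, the assertion that the components of $\nabla^N\log|x|$ are ``sufficiently non-degenerate'' to force $\nabla\mu\equiv 0$ is left unproved and requires an explicit verification for each $N$. The paper instead first reduces to radial $u$ by averaging over $SO(N)$ (using Lemma~\ref{lem:inv}), which makes $\mu=a(|x|)$ radial automatically; it then decomposes $\nabla^N$ of a radial function into a sum of tensors $\mathcal{P}_{N-2i}$ built from homogeneous polynomials of distinct degrees in $x/|x|$ (equations \eqref{eq:34}--\eqref{eq:36}) and matches degree by degree, yielding $d^{N-i}\widetilde u/ds^{N-i}=a\,d^{N-i}\log|x|/ds^{N-i}$ for each $i$. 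The ratio of the $i=\lfloor N/2\rfloor$ and $i=\lfloor N/2\rfloor-1$ equations eliminates $a$ and gives an ODE whose solutions $\widetilde u=\tilde c\log s+Q(s)$ are incompatible with $W^{N,1}$. This stays strictly at order $N$ and avoids both of the issues above.
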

\begin{remark}
  An easy consequence of \rth{th:main} is that the same result holds
  in the space $W^{N,1}_0(\Omega)$ (the closure of $C^\infty_c(\Omega)$
  in $W^{N,1}(\Omega)$), for {\em every}
  $\Omega\subset\R^N$. More precisely,  
  \begin{equation*}
    \|u\|_\infty\leq \left(\sqrt{l_N}\,\SNO\right)^{-1}\int_{\Omega}|\nabla^N
   u|\,dx\,,\quad \forall u\in W^{N,1}_0(\Omega),
  \end{equation*}
 the constant $K_N=\left(\sqrt{l_N}\,\SNO\right)^{-1}$ is optimal, and
 the inequality is strict, unless $u$ is the zero function. This
 follows from the invariance of the $W^{N,1}$-seminorm with respect to scalings.
\end{remark}
 The main ingredient of the proof of \rth{th:main} is another property
 of the function $\log |x|$, namely, it is a fundamental solution for the
 elliptic operator 
$${\mathcal L}=(-1)^N \sum_{i_1,\ldots,i_N\in I_N} \frac{\partial ^N}{\partial x_{i_1}\cdots\partial
  x_{i_N}}\left(|x|^N \frac{\partial ^N}{\partial x_{i_1}\cdots\partial
  x_{i_N}}\right),~\text{ (see \rprop{prop:prop} below)}.
$$
\subsubsection*{Acknowledgments.} The author
was supported by  the Israel Science Foundation (Grant No. 999/13). Part of this work was done while he was visiting the University Claude Bernard Lyon 1. He thanks the Mathematics Department for its hospitality.

\section{Preliminaries}
\label{sec:prelo}
    We will consider also the generalization of
    \eqref{eq:61}--\eqref{eq:62} for any $m\geq1$. So for each $u\in
    C^m(\Omega)$, $m\geq 1$, where $\Omega$ is a domain in $\R^N$, we set
\begin{align}
   \label{eq:65}
   \nabla^m u&=\left\{\frac{\partial ^mu}{\partial
x_{i_1}\cdots\partial
  x_{i_m}}\right\}_{i_1,\ldots,i_m\in I_N},\\
\label{eq:2}
  |\nabla^mu|&=\left\{\sum_{i_1,\ldots,i_m\in I_N}\left(\frac{\partial ^mu}{\partial x_{i_1}\cdots\partial x_{i_m}}\right)^2\right\}^{1/2}.
\end{align}
We will use the same notation for functions in $W^{m,1}(\Omega)$.
\par A simple, yet important feature of our analysis is the invariance
    of the norm in \eqref{eq:2} with respect to the orthogonal
    group $O(N)$. This is the content of the next Lemma.
    \begin{lemma}
      \label{lem:inv}
For $u\in C^m(B_R(0)\setminus\{0\})$, $m\ge1$, and $A\in O(N)$ set $u_A(x)=u(Ax)$. Then
 \begin{equation}
   \label{eq:3}
   |\nabla^m u_A(x)|=|(\nabla^mu)(Ax)|,~\forall x\in
   B_R(0)\setminus\{0\}\,.
 \end{equation}
The identity \eqref{eq:3} holds a.e.~on $\R^N$ for $u\in
W^{m,1}(\R^N)$. In particular, for any $u\in W^{m,1}(\R^N)$ and $A\in O(N)$, $u_A(x):=u(Ax)$ satisfies
 \begin{equation}
   \label{eq:28}
   \int_{\R^N} |\nabla^m u_A|=\int_{\R^N} |\nabla^m u|\,.
 \end{equation}
    \end{lemma}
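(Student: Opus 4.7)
The plan is to establish \eqref{eq:3} pointwise by a direct chain-rule computation, and then to deduce \eqref{eq:28} by a change of variables. The $W^{m,1}$ version will follow from the smooth case by density, or alternatively directly from the fact that weak derivatives obey the usual chain rule under a linear change of variables.

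First I would fix $A=(a_{ij})\in O(N)$ and apply the chain rule iteratively. Setting $y=Ax$, so that $y_j=\sum_i a_{ji}x_i$, induction on $m$ gives
\begin{equation*}
\frac{\partial^m u_A}{\partial x_{i_1}\cdots\partial x_{i_m}}(x)
=\sum_{j_1,\ldots,j_m\in I_N} a_{j_1 i_1}\cdots a_{j_m i_m}\,
\frac{\partial^m u}{\partial x_{j_1}\cdots\partial x_{j_m}}(Ax).
\end{equation*}
The key observation is then that this is simply the action of $A$ on each index of the rank-$m$ tensor $(\nabla^m u)(Ax)$, and the norm in \eqref{eq:2} is the Frobenius (Hilbert--Schmidt) norm on such tensors, which is invariant under orthogonal transformations in each index separately.

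Next I would make this invariance explicit. Write $T_{j_1,\ldots,j_m}:=\frac{\partial^m u}{\partial x_{j_1}\cdots\partial x_{j_m}}(Ax)$. Then
\begin{equation*}
|\nabla^m u_A(x)|^2 = \sum_{i_1,\ldots,i_m}\sum_{j_1,\ldots,j_m}\sum_{k_1,\ldots,k_m}
\Bigl(\prod_{\ell=1}^m a_{j_\ell i_\ell}a_{k_\ell i_\ell}\Bigr) T_{j_1,\ldots,j_m}T_{k_1,\ldots,k_m}.
\end{equation*}
Carrying out the sums over the $i_\ell$ and using $\sum_{i} a_{ji}a_{ki}=\delta_{jk}$ (which is exactly $A A^T=I$), each product collapses to $\prod_\ell \delta_{j_\ell k_\ell}$, leaving $\sum_{j_1,\ldots,j_m}T_{j_1,\ldots,j_m}^2=|(\nabla^m u)(Ax)|^2$. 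This proves \eqref{eq:3}.

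For the $W^{m,1}$ statement I would argue that the above chain-rule identity for distributional derivatives of a composition with the diffeomorphism $x\mapsto Ax$ holds a.e., so \eqref{eq:3} extends a.e.\ to any $u\in W^{m,1}(\R^N)$ (the simplest justification being to approximate $u$ by $C^\infty_c$ functions in $W^{m,1}(\R^N)$, apply the identity, and pass to the limit a.e.\ along a subsequence). Then \eqref{eq:28} follows by integrating \eqref{eq:3} and performing the change of variables $y=Ax$, using $|\det A|=1$. There is no serious obstacle here; the only point that requires care is keeping track of the multi-indices in the tensorial chain rule, but the orthogonality relation $A^TA=I$ handles the summation cleanly.
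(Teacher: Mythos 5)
Your proof is correct and follows essentially the same route as the paper: iterate the chain rule to express $\nabla^m u_A$ as the $m$-fold contraction of $(\nabla^m u)(Ax)$ with $A$, expand the Frobenius norm squared, collapse the sums over the $i_\ell$'s via $\sum_i a_{ji}a_{ki}=\delta_{jk}$, and extend to $W^{m,1}$ by density of $C^\infty_c$. The only cosmetic difference is that you make the change-of-variables step for \eqref{eq:28} explicit (with $|\det A|=1$), which the paper leaves implicit.
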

 \begin{proof}
  Put $y=Ax$. From the basic formula
  \begin{equation*}
    \frac{\partial}{\partial x_i}\big(u(Ax)\big)=\sum_{j=1}^N a_{j,i}\frac{\partial u}{\partial y_j}(y)
  \end{equation*}
we deduce that for every $m$-tuple
$(i_1,\ldots,i_m)\in{(I_N)}^m$ we have
\begin{equation}
  \label{eq:1}
\frac{\partial ^m u_A}{\partial
x_{i_1}\cdots\partial
  x_{i_m}}(x)=\sum_{j_1,\ldots,j_m \in I_N}a_{j_1,i_1}a_{j_2,i_2}\cdots a_{j_m,i_m}\frac{\partial ^mu}{\partial
y_{j_1}\cdots\partial
  y_{j_m}}(y)\,.
\end{equation}
Taking the square of \eqref{eq:1} and summing over all $m$-tuples yields
\begin{multline}
\label{eq:4}
  |\nabla^m u_A(x)|^2=\sum_{i_1,\ldots,i_m \in I_N}\left(\frac{\partial ^m u_A}{\partial
x_{i_1}\cdots\partial
x_{i_m}}(x)\right)^2=\\\sum_{\substack{i_1,\ldots,i_m
\in I_N \\j_1,\ldots,j_m\in I_N\\
k_1,\ldots,k_m\in I_N}}
a_{j_1,i_1}a_{j_2,i_2}\cdots a_{j_m,i_m}
a_{k_1,i_1}a_{k_2,i_2}\cdots a_{k_m,i_m} \left(\frac{\partial ^mu}{\partial
y_{j_1}\cdots\partial
  y_{j_m}}(y)\right)\left(\frac{\partial ^mu}{\partial
y_{k_1}\cdots\partial y_{k_m}}(y)\right)\\
=\sum_{\substack{j_1,\ldots,j_m\in\{1,\ldots,N\}\\
k_1,\ldots,k_m\in\{1,\ldots,N\}}}\left\{\Prod_{s=1}^m \left(\sum_{i_s=1}^N
a_{j_s,i_s}a_{k_s,i_s}\right)\right\}\left(\frac{\partial ^mu}{\partial
y_{j_1}\cdots\partial
  y_{j_m}}(y)\right)\left(\frac{\partial ^mu}{\partial
y_{k_1}\cdots\partial y_{k_m}}(y)\right).
\end{multline}
Next we notice that since $A$ is orthogonal we have
\begin{equation*}
  \sum_{i_s=1}^Na_{j_s,i_s}a_{k_s,i_s}=\delta_{j_s,k_s},~s=1,\ldots,m.
\end{equation*}
Using it in \eqref{eq:4} gives
\begin{equation*}
  |\nabla^m
  u_A(x)|^2=\sum_{j_1,\ldots,j_m\in I_N}\left(\frac{\partial ^mu}{\partial
y_{j_1}\cdots\partial
  y_{j_m}}(y)\right)^2=|(\nabla^mu)(y)|^2,
\end{equation*}
and \eqref{eq:3} follows.
\par The statement about $u\in W^{m,1}(\R^N)$ and then the equality \eqref{eq:28} follow from
\eqref{eq:3} and the density of $C^\infty_c(\R^N)$ in $W^{m,1}(\R^N)$. 
 \end{proof}
Part (ii) of the next Corollary was proved in \cite{mss}; we present a
more elementary proof. It is based on part (i) which in turn follows
 immediately from \rlemma{lem:inv}. 
 \begin{corollary}
   \label{cor:rad}
$~$\\(i) If $u\in C^m(B_R(0)\setminus\{0\})$ or $u\in W^{m,1}(B_R(0))$, $m\ge1$, is radial then the
function $x\mapsto |\nabla^m u(x)|$ is also radial.\\
(ii) For every $m\geq 1$ there exists a positive constant $\ell^m_N$
such that
\begin{equation}
  \label{eq:5}
  |\nabla^m \log |x||=\frac{\sqrt{\ell^m_N}}{|x|^{m}},\quad\quad x\in\R^N\setminus\{0\}.
\end{equation}
 \end{corollary}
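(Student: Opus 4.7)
The plan is to deduce both parts of \rcor{cor:rad} from the orthogonal invariance in \rlemma{lem:inv} combined with the homogeneity of $\log|x|$ under scaling.

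For part (i), observe that if $u$ is radial then $u_A(x)=u(Ax)=u(x)$ for every $A\in O(N)$, because $|Ax|=|x|$. Plugging $u_A=u$ into \eqref{eq:3} gives
\[
|\nabla^m u(x)|=|(\nabla^m u)(Ax)|,\qquad\forall A\in O(N),
\]
so the map $x\mapsto|\nabla^m u(x)|$ is $O(N)$-invariant and hence radial. In the $W^{m,1}$ case the same identity holds a.e.~by the density part of \rlemma{lem:inv} (approximating by smooth radial functions).

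For part (ii), apply (i) to $v(x)=\log|x|$, which is radial and smooth on $\R^N\setminus\{0\}$; thus $r(x):=|\nabla^m v(x)|$ is radial. To pin down its radial profile I would exploit scaling: for $\lambda>0$ put $w_\lambda(x)=v(\lambda x)=\log\lambda+\log|x|$. Since $m\ge1$, the additive constant $\log\lambda$ is killed, so $\nabla^m w_\lambda(x)=\nabla^m v(x)$ pointwise. On the other hand, the chain rule yields $|\nabla^m w_\lambda(x)|=\lambda^m|(\nabla^m v)(\lambda x)|$. Equating the two expressions gives the homogeneity
\[
r(x)=\lambda^m\, r(\lambda x),\qquad\forall\lambda>0.
\]
Taking $\lambda=1/|x|$ and using that $r$ is radial yields $r(x)=r(e_1)/|x|^m$, which is \eqref{eq:5} with $\sqrt{\ell^m_N}:=r(e_1)$.

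It remains to verify $\ell^m_N>0$. If $r(e_1)=0$ then by the homogeneity just derived $r\equiv0$ on $\R^N\setminus\{0\}$, so all $m$-th order partial derivatives of $\log|x|$ would vanish identically there, forcing $\log|x|$ to agree on $\R^N\setminus\{0\}$ with a polynomial of degree less than $m$; this contradicts its singularity at the origin. Overall the argument is short and essentially mechanical; the only mild subtleties I anticipate are this positivity check and, if full rigor is desired, the approximation argument that extends (i) from smooth radial functions to $W^{m,1}$ radial functions.
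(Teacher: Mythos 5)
Your proof is correct and follows essentially the same route as the paper: part (i) is a direct specialization of Lemma~\ref{lem:inv} to radial $u$, and part (ii) combines radiality with the degree-$(-m)$ homogeneity of $\nabla^m\log|x|$ (the paper simply states this homogeneity as obvious, while you derive it cleanly from the identity $\log|\lambda x|=\log\lambda+\log|x|$). The one addition worth noting is your explicit verification that $\ell^m_N>0$, a point the paper takes for granted; your argument for it is sound.
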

 \begin{proof}
   (i) This is a direct consequence of \rlemma{lem:inv} which gives in
   our case 
\begin{equation*}
|\nabla^m u(x)|=|\nabla^m u(Ax)|,\quad \forall
x\in\R^N\setminus\{0\},\,\forall A\in O(N).
\end{equation*}
(ii) By (i), $|\nabla^m \log |x||$ is a radial function. Since each
derivative $\frac{\partial ^m \log|x|}{\partial x_{i_1}\cdots\partial
  x_{i_m}}$ is clearly homogenous of order $-m$, the same is true for
$|\nabla^m \log |x||$. Thus, the function $|\nabla^m \log |x||$ is
necessarily of the form \eqref{eq:5}. 
 \end{proof}
For the case $m=N$ we use the shorthand $\ell_N=\ell^N_N$ (see
\eqref{eq:63}).
\begin{remark}
  \label{rem:explicit} The authors of \cite{mss} computed an 
  {\em explicit} expression for $\ell_N^m$:
  \begin{equation}
    \label{eq:6}
    \ell_N^m=m!\sum_{l=0}^{\left \lfloor{m/2}\right \rfloor}
    (m-2l)!l!\left(\frac{N-3}{2}+l\right)_l
\left(\sum_{n=\left \lceil{m/2}\right
    \rceil}^{m-l}2^{2n-m+l}\frac{(-1)^n}{2n}{\binom{n}{m-n}}{\binom{m-n}{l}}\right)^2\,,
  \end{equation}
with the notation
\begin{equation*}
  (\nu)_k=
  \begin{cases}
    \Prod_{j=0}^{k-l}(\nu-j)&\text{ for }\nu\in\R, k\in\N,\\
   1 &\text{ for }\nu\in\R, k=0.
  \end{cases}
\end{equation*}
 The first values of $\ell_N$ are: $\ell_1=1,\ell_2=2,\ell_3=28$,
 which by \rth{th:main} imply
 \begin{equation*}
   K_1=\frac{1}{2},~K_2=\frac{1}{2\pi\sqrt{2}},~K_3=\frac{1}{4\pi\sqrt{28}}\;.
 \end{equation*}

\end{remark}
\section{Proof of \rth{th:main}}
The proof of \rth{th:main} is divided into two parts. In the
first part we compute the value of $K_N$ and in the second we prove
that equality cannot hold in \eqref{eq:64}, unless $u\equiv 0$.
\subsection{The value of $K_N$}
\label{subsec:K_N}
The main ingredient of the proof of \rth{th:main} is the
identification of $\log|x|$ as a fundamental solution of a certain
operator of order $2N$:
\begin{proposition}
  \label{prop:prop}
For all  $N\geq 2$ we have, in the sense of distributions:
\begin{equation}
  \label{eq:8}
  (-1)^N \sum_{i_1,\ldots,i_N\in I_N} \frac{\partial ^N}{\partial x_{i_1}\cdots\partial
  x_{i_N}}\left(|x|^N \frac{\partial ^N\log|x|}{\partial x_{i_1}\cdots\partial
  x_{i_N}}\right)=-\ell_N\SNO\delta_0\,.
\end{equation}
\end{proposition}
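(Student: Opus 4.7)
The plan has two stages.

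\emph{Stage 1: $L(\log|x|) = 0$ classically on $\R^N\setminus\{0\}$.} By \rlemma{lem:inv}, $L$ commutes with the action of $O(N)$, and since $\log|x|$ is radial, so is $L(\log|x|)$. The operator $L$ has scaling weight $-N$ ($|x|^N$ contributes $+N$ and each $\partial^\alpha$ with $|\alpha|=N$ contributes $-N$), so $L(\log|x|)$ is homogeneous of degree $-N$, hence equal to $c/|x|^N$ for a constant $c$. Writing $L(|x|^\beta) = q(\beta)|x|^{\beta-N}$ for a polynomial $q$ in $\beta$, one has $L(\log|x|) = \lim_{\beta\to 0} L((|x|^\beta-1)/\beta) = q'(0)/|x|^N$, so $c = q'(0)$. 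To show $q$ is even (whence $q'(0) = 0$), I would apply Green's identity for the (formally self-adjoint) $L$ to $u = |x|^\beta$ and $v = |x|^{-\beta}$ on the annulus $\{\epsilon < |x| < R\}$: the bulk integral equals $(q(\beta)-q(-\beta))\omega_{N-1}\log(R/\epsilon)$, while every boundary term produced in the $2N$ iterated integrations by parts is $r$-independent, because at the step where $k-1$ derivatives have landed on $v$ and $N-k$ remain on $(|x|^N\partial^\alpha u)$, the integrand degrees add as $(\beta+k-N)+(-\beta-k+1)+0+(N-1)=0$ (including the normal and the $r^{N-1}$ surface measure). Hence the contributions on $\partial B_R$ and $\partial B_\epsilon$ (with opposite outward normals) cancel, forcing $q(\beta) = q(-\beta)$.

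\emph{Stage 2: the delta-mass.} Since $L(\log|x|)$ is supported at $\{0\}$ and is homogeneous of degree $-N$, the structure theorem gives $L(\log|x|) = c_0\delta_0$. To compute $c_0$, pair against a radial test function $\chi(|x|)$ with $\chi(0)=1$: formal self-adjointness yields $c_0 = \int\log|x|\,L\chi\,dx$. By \rlemma{lem:inv}, $L\chi$ is radial, and $L$ reduces on radial functions to a one-dimensional differential operator $\widetilde L$ of order $2N$ in $r$ (the Euler--Lagrange operator, up to the factor $r^{N-1}$ from polar coordinates, of the radial reduction of $\tfrac12\int|x|^N|\nabla^N v|^2\,dx$). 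Hence $c_0 = \omega_{N-1}\int_0^\infty(\log r)\widetilde L\chi(r)\,dr$. Because $\log r$ itself lies in $\ker\widetilde L$ on $(0,\infty)$ (from Stage 1) and boundary terms at $r = \infty$ vanish by compact support of $\chi$, iterated 1D integration by parts reduces this integral to a single boundary contribution at $r = 0$; the identity $|\nabla^N\log r|^2 = \ell_N/r^{2N}$ from \rcor{cor:rad} evaluates this contribution as $-\ell_N\omega_{N-1}\chi(0) = -\ell_N\omega_{N-1}$.

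The main obstacle is Stage 2: the 1D boundary-term bookkeeping at $r = 0$ and the identification of its coefficient with $\ell_N\omega_{N-1}$. The computation is direct for $N = 1,2$ (giving $-2\delta_0$ and $-4\pi\delta_0$), and for general $N$ rests on an explicit radial form of $|\nabla^N v|^2$ in terms of $v^{(k)}(r)/r^{N-k}$.
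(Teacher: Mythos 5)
Your Stage 1 argument that the classical expression vanishes on $\R^N\setminus\{0\}$ is correct and genuinely different from the paper's. You introduce the polynomial $q(\beta)$ defined by $L(|x|^\beta)=q(\beta)|x|^{\beta-N}$ (with $q(0)=0$ since $L$ annihilates constants), identify $c=q'(0)$ by letting $\beta\to0$ through $(|x|^\beta-1)/\beta\to\log|x|$, and show $q$ is even by applying Green's identity to $u=|x|^\beta$, $v=|x|^{-\beta}$ on an annulus; the degree count you give makes every boundary integrand homogeneous of degree zero after including $d\sigma$, so the $\partial B_R$ and $\partial B_\epsilon$ contributions cancel via the opposite normals and the bulk term $(q(\beta)-q(-\beta))\,\omega_{N-1}\log(R/\epsilon)$ must vanish. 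The paper instead argues by contradiction: with $F=c|x|^{-N}$ and $c\neq0$, testing against cut-offs $v_\delta$ (which have $\int|\nabla^N v_\delta|$ bounded uniformly in $\delta$) would force $\int_{\delta<|x|<1}|x|^{-N}$ to stay bounded, contradicting the $\log(1/\delta)$ divergence. Your route is arguably more conceptual. Likewise your identification $\mathcal{F}=c_0\delta_0$ is cleaner than the paper's: you use that $\mathcal{F}$ is a distribution supported at the origin and homogeneous of degree $-N$, and $D^\alpha\delta_0$ has degree $-N-|\alpha|$, so only $\alpha=0$ can occur, whereas the paper invokes Schwartz's structure theorem and then an auxiliary observation that each summand in $\mathcal{F}$ is a first derivative of an $L^1_{\mathrm{loc}}$ function.

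The genuine gap is the evaluation $c_0=-\ell_N\,\omega_{N-1}$, which you flag yourself. The assertion that the $2N$-fold one-dimensional integration by parts against a fixed radial $\chi$ reduces to ``a single boundary contribution at $r=0$'' whose value is $-\ell_N\omega_{N-1}\chi(0)$ is not established: for $N\geq2$ several boundary terms at $r=0$ survive, their coefficients depend on the explicit radial reduction of $L$, and it is not clear how the scalar identity $|\nabla^N\log r|^2=\ell_N/r^{2N}$ feeds into that bookkeeping. The paper sidesteps the issue entirely by choosing a scale-dependent test function $u_\varepsilon$ that equals $\log(1/|x|)$ on $\{\varepsilon<|x|<1\}$ and is cut off smoothly at scales $\varepsilon$ and $1$. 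Then the defining pairing $\langle\mathcal{F},u_\varepsilon\rangle=\int|x|^N(\nabla^N\log|x|)\cdot(\nabla^N u_\varepsilon)$ equals $-\ell_N\int_{\{\varepsilon<|x|<1\}}|x|^{-N}+O(1)=-\ell_N\omega_{N-1}\log(1/\varepsilon)+O(1)$ by the identity $|x|^{2N}|\nabla^N\log|x||^2=\ell_N$, while $\langle\mathcal{F},u_\varepsilon\rangle=c_0\,u_\varepsilon(0)=c_0\left(\log(1/\varepsilon)+O(1)\right)$; dividing and letting $\varepsilon\to0$ yields the constant with no boundary-term analysis at all. You would need either to carry out the 1D bookkeeping rigorously or to switch to such an adapted test function.
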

\begin{proof}
  The function
  \begin{equation}
\label{eq:10}
    F(x):=(-1)^N \sum_{i_1,\ldots,i_N\in I_N} \frac{\partial ^N}{\partial x_{i_1}\cdots\partial
  x_{i_N}}\left(|x|^N \frac{\partial ^N\log|x|}{\partial x_{i_1}\cdots\partial
  x_{i_N}}\right)
  \end{equation}
 clearly belongs to $C^\infty(\R^N\setminus\{0\})$. 
 We claim that $F$ is a {\em radial}
 function. Indeed, A similar
  computation to the one used in the proof of \rlemma{lem:inv} shows
  that for a smooth $u$ on either $\R^N$ or $\R^N\setminus\{0\}$ and
  $A\in O(N)$, the function $u_A(x)=u(Ax)$ satisfies
  \begin{equation*}
    \sum_{i_1,\ldots,i_N\in I_N} \frac{\partial ^N}{\partial x_{i_1}\cdots\partial
  x_{i_N}}\left(|x|^N \frac{\partial ^N u_A}{\partial x_{i_1}\cdots\partial
  x_{i_N}}\right)(x)=\sum_{i_1,\ldots,i_N\in I_N} \frac{\partial ^N }{\partial y_{i_1}\cdots\partial
  y_{i_N}}\left(|y|^N \frac{\partial ^N u}{\partial y_{i_1}\cdots\partial
  y_{i_N}}\right)(y),
  \end{equation*}
 with $y=Ax$. Since $\log|x|$ is radial, this implies that
 $F(Ax)=F(x)$, whence $F$ is radial.

\par  From \eqref{eq:10} it is clear that $F$ is homogenous of
 degree $-N$. Therefore it must be of the form
 \begin{equation}
   \label{eq:11}
   F(x)=c|x|^{-N},
 \end{equation}
 for some constant $c\in\R$. We claim that $c=0$. 
\par Assume by contradiction that $c\neq 0$. Fix  a
function $\varphi\in C^\infty[0,\infty)$ taking values in $[0,1]$ and satisfying
\begin{equation}
  \label{eq:12}
  \varphi(t)=
  \begin{cases}
    0& \text{ for }t\in[0,1/2],\\
     1& \text{ for }t\geq 1,
  \end{cases}
\end{equation}
 and then, for any $\delta\in(0,1)$ let
 $\varphi_\delta(t)=\varphi(t/\delta)$. Fix also a function $\zeta\in
 C^\infty_c(\R^N)$, taking values in $[0,1]$ and satisfying $\zeta=1$ on $B_1(0)$ and $\zeta=0$ on
 $\R^N\setminus B_2(0)$. Finally, define
 $v_\delta(x)=\zeta(x)\varphi_\delta(|x|)$ which belongs to
 $C^\infty_c(\R^N)$. Since $v_\delta\equiv 1$ on $\{\delta<|x|<1\}$ and $|\nabla^N \varphi_\delta(|x|)|\leq
 C/\delta^N$ for $|x|\leq\delta$ we have
 \begin{equation}
   \label{eq:13}
   \int_{\R^N}|\nabla^Nv_\delta|\leq C,~\text{ uniformly in }\delta.
 \end{equation}
On the other hand, by \eqref{eq:5},
\begin{equation}
\label{eq:26}
|x|^N  \big|\frac{1}{\sqrt{\ell_N}}\nabla^N\log|x|\big|=1,
\end{equation}
whence
\begin{equation}
  \label{eq:9}
  \int_{\R^N}|\nabla^Nv_\delta|\geq
  \int_{\{\delta/2<|x|<2\}}|\nabla^Nv_\delta|\geq 
\left|\frac{1}{\sqrt{\ell_N}}\int_{\{\delta/2<|x|<2\}}\left(\nabla^Nv_\delta\right)\cdot\left(|x|^N\nabla^N\log|x|\right)\right|\,.
\end{equation}
Applying integration by parts 
to the integral on  the R.H.S.~of \eqref{eq:9} and using
\eqref{eq:11} gives
\begin{multline}
  \label{eq:14}
\frac{1}{\sqrt{\ell_N}}\int_{\{\delta/2<|x|<2\}}\left(\nabla^Nv_\delta\right)\cdot\left(|x|^N\nabla^N\log|x|\right)=\frac{1}{\sqrt{\ell_N}}\int_{\{\delta/2<|x|<2\}}Fv_\delta\\=
\frac{c}{\sqrt{\ell_N}}\int_{\{\delta/2<|x|<2\}}\frac{v_\delta}{|x|^N}=\frac{c}{\sqrt{\ell_N}}\int_{\{\delta<|x|<1\}}\frac{1}{|x|^N}+O(1)=
\frac{c}{\sqrt{\ell_N}}\,\SNO\log(1/\delta)+O(1)\,,
\end{multline}
where $O(1)$ denotes a bounded quantity,
 uniformly in $\delta$.
Combining \eqref{eq:9}--\eqref{eq:14} with \eqref{eq:13} leads to a
contradiction for $\delta$ small enough, whence $c=0$ as claimed.
\par From the above we deduce that the {\em distribution} 
\begin{equation}
\label{eq:15}
  \mathcal{F}:=(-1)^N \sum_{i_1,\ldots,i_N\in I_N} \frac{\partial ^N}{\partial x_{i_1}\cdots\partial
  x_{i_N}}\left(|x|^N \frac{\partial ^N\log|x|}{\partial x_{i_1}\cdots\partial
  x_{i_N}}\right)\in \mathcal{D}'(\R^N)
\end{equation}
satisfies $\supp(\mathcal{F})\subset\{0\}$. By a celebrated theorem of
L.~Schwartz~\cite{sch} it follows that
\begin{equation}
  \label{eq:16}
  \mathcal{F}=\sum_{j=1}^Lc_jD^{\alpha_j} \delta_0\,,
\end{equation}
for some multi-indices  $\alpha_1,\ldots,\alpha_L$.
But by \eqref{eq:5} each term on the R.H.S.~of \eqref{eq:15} can be
written as
\begin{equation*}
 \frac{\partial ^N}{\partial x_{i_1}\cdots\partial
  x_{i_N}}\left(|x|^N \frac{\partial ^N\log|x|}{\partial x_{i_1}\cdots\partial
  x_{i_N}}\right)=\frac{\partial}{\partial x_{i_1}}\left(\frac{\partial ^{N-1}}{\partial x_{i_2}\cdots\partial
  x_{i_N}}\left(|x|^N \frac{\partial ^N\log|x|}{\partial x_{i_1}\cdots\partial
  x_{i_N}}\right)\right):=\frac{\partial g}{\partial x_{i_1}},
\end{equation*}
with $g$ satisfying $|g(x)|\leq C/|x|^{N-1}$, so that $g\in
L^1_{\text{loc}}(\R^N)$. It follows that $\mathcal{F}$ in
\eqref{eq:15} is a sum of first derivatives of functions in
$L^1_{\text{loc}}$, whence for some $\mu\in\R$,
\begin{equation}
  \label{eq:41}
  \mathcal{F}=\mu\delta_0\,.
\end{equation}
\par It remains to determine the value of $\mu$ in \eqref{eq:41}. For
that matter we will use a family of test functions
$\{u_\varepsilon\}$, 
for small $\varepsilon>0$. 
Let $\varphi_\varepsilon(t)=\varphi(t/\varepsilon)$ with $\varphi$
given by \eqref{eq:12} and define  on $[0,\infty)$,
$f_\varepsilon(t)=(-\log\varepsilon)-\int_{\varepsilon}^t
\frac{\varphi_\varepsilon (s)}{s}\,ds$. 
Finally, let
 $u_\varepsilon(x)=\zeta(x)f_\varepsilon(|x|)$ on $\R^N$, where
 $\zeta$ is the same function used in the proof of \rprop{prop:prop}
 (recall $\zeta=1$ on $B_1(0)$ while $\zeta=0$ outside $B_2(0)$). It
 is easy to verify  that $u_\varepsilon\in C^\infty_c(\R^N)$
 (in fact, $\text{supp}(u_\varepsilon)\subset B_2(0)$) and it satisfies:
 \begin{align}
\|u\|_{L^\infty(\R^N)}=u_\varepsilon(0)&=\log(1/\varepsilon)+O(1),\label{eq:fl1}\\
   u_\varepsilon(x)&=\log(1/|x|) \text{ on }B_1\setminus B_\varepsilon,\label{eq:fl2}\\
\big\||\nabla^k u_\varepsilon|\big\|_{L^\infty(B_\varepsilon)}&=
  O(1)\cdot \varepsilon^{-k},\;1\leq k\leq N,\label{eq:fl3}\\
\big\||\nabla^k u_\varepsilon|\big\|_{L^\infty(\R^N\setminus B_1)}&=O(1), \;1\leq k\leq N,\label{eq:fl4}
 \end{align}
 where $O(1)$ stands for a quantity which is bounded uniformly in $\varepsilon$.


Since $u_\varepsilon\in
 C^\infty_c(\R^N)$, we get by the definition of $\mathcal{F}$ (see
 \eqref{eq:15}) and \eqref{eq:41} that
 \begin{equation}
   \label{eq:21}
   \mu u_\varepsilon(0)=\int_{\R^N}|x|^N\left(\nabla^N
     u_\varepsilon\right)\cdot\left(\nabla^N\log |x|\right)\,.
 \end{equation}
By \eqref{eq:fl2}--\eqref{eq:fl4} 
 we get for the R.H.S.~of \eqref{eq:21},
\begin{multline}
  \label{eq:23}
  \int_{\R^N}|x|^N\left(\nabla^N
     u_\varepsilon\right)\cdot\left(\nabla^N\log |x|\right)=-\int_{\{\varepsilon<|x|<1\}}|x|^N\left(\nabla^N
     \log |x|\right)\cdot\left(\nabla^N\log |x|\right)+O(1)\\
=-\ell_N \int_{\{\varepsilon<|x|<1\}}\frac{dx}{|x|^N}+O(1)=-\ell_N\SNO(-\log \varepsilon)+O(1)\,,
\end{multline}
 where we also used \eqref{eq:26}. Using \eqref{eq:21}--\eqref{eq:23} in
 conjunction with \eqref{eq:fl1}  yields
$\mu=-\ell_N\SNO$, as claimed.
\end{proof}
\begin{proof}[Proof of part (i) of \rth{th:main}]
  Clearly it is enough to consider $u\in C^\infty_c(\R^N)$ and without
  loss of generality we may assume $u(0)=\|u\|_\infty$. By
  \eqref{eq:8},
  \begin{equation}
    \label{eq:25}
    \ell_N\SNO u(0)=-\int_{\R^N}|x|^N\left(\nabla^N
     u\right)\cdot\left(\nabla^N\log |x|\right)\,.
  \end{equation}
 Using \eqref{eq:26} to bound the R.H.S.~of \eqref{eq:25} from above
 (as in \eqref{eq:9}) yields
 \begin{equation*}
    \ell_N\SNO u(0)\leq \sqrt{l_N} \int_{\R^N}|\nabla^Nu|,
 \end{equation*}
whence
\begin{equation}
  \label{eq:27}
  K_N\leq \left(\sqrt{\ell_N}\,\SNO\right)^{-1}.
\end{equation}
\par To prove that equality holds in \eqref{eq:27} it suffices to
consider $u_\varepsilon$ constructed in the course of the proof of
\rprop{prop:prop}.
Indeed, the arguments used there yield
\begin{equation}
  \label{eq:29}
  \int_{\R^N}|\nabla^Nu_\varepsilon|=\int_{\{\varepsilon<|x|<1\}}|\nabla^Nu_\varepsilon|+O(1)=
\int_{\{\varepsilon<|x|<1\}}|\nabla^N\log|x||+O(1)=\sqrt{\ell_N}\,\SNO(-\log\varepsilon)+O(1),
\end{equation}
 which in conjunction with \eqref{eq:fl1} gives 
 \begin{equation*}
   \lim_{\varepsilon\to 0^{+}}\frac{\int_{\R^N}|\nabla^Nu_\varepsilon|}{u_\varepsilon(0)}=\sqrt{\ell_N}\,\SNO.
 \end{equation*}
This clearly implies equality in \eqref{eq:27}.
\end{proof}
\subsection{Nonexistence of an optimizer in \eqref{eq:64}}
\label{subsec:no-opt}
\begin{proof}[Proof of \rth{th:main}(ii)]
  Looking for contradiction, assume  that for some $N\geq2$ there exists $u\in
 W^{N,1}(\R^N)$, $u\neq 0$, for which equality holds in
 \eqref{eq:64}. We may assume without loss of generality that
 $u(0)=\|u\|_\infty$. 
\par We first show that such $u$ can be assumed {\em
   radial}. Indeed, notice that for every $A_1,A_2\in O(N)$, the
 function $v(x):=(u(A_1x)+u(A_2x))/2$ satisfies
 $v(0)=\|v\|_\infty=\|u\|_\infty$ and 
 \begin{equation*}
   \int_{\R^N}|\nabla^N v|\leq \frac{1}{2}\left(\int_{\R^N}
     |\nabla^N u(A_1x)|+\int_{\R^N} |\nabla^N
     u(A_2x)|\right)=\int_{\R^N}|\nabla^N u|,
 \end{equation*}
 where in the last equality we used \eqref{eq:28}. It follows that $v$
 too realizes equality in \eqref{eq:64}. We can apply the same
 principle also for {\em continuous averaging}. Indeed, the function
 \begin{equation}
   \label{eq:30}
   \widetilde u(x):=\int_{SO(N)} u(Ax)\,dA,
 \end{equation}
where the integration is with respect to the (normalized) Haar measure on $SO(N)$
(see \cite{hr}), belongs to
$W^{N,1}(\R^N)$ and satisfies $\widetilde u(0)=\|\widetilde
u\|_\infty=\|u\|_\infty$ and  
\begin{equation*}
   \int_{\R^N}|\nabla^N \widetilde u|\leq \int_{\R^N}|\nabla^N u|.
 \end{equation*}
  Hence, equality must hold in the last inequality and $\widetilde u$
  is a (nontrivial) radial function for which equality holds in
  \eqref{eq:64}.
\par Let $\{u_\varepsilon\}_{\varepsilon>0}\subset C^\infty_c(\R^N)$ satisfy $u_\varepsilon\to
\tu$ in $W^{N,1}(\R^N)$, whence also in the uniform norm on
$\R^N$. Since \eqref{eq:25} holds for $u=u_\varepsilon$, passing to
the limit yields
\begin{equation}
    \label{eq:31}
    \ell_N\SNO\tu(0)=-\int_{\R^N}|x|^N\left(\nabla^N
     \tu\right)\cdot\left(\nabla^N\log |x|\right)\,.
  \end{equation}
Applying the Cauchy-Schwarz inequality to the integrand on the
R.H.S.~of \eqref{eq:31} in conjunction with \eqref{eq:63} and our
assumption that equality holds in \eqref{eq:64} for $\tu$  yields
\begin{equation}
    \label{eq:32}
    \ell_N\SNO\tu(0)=-\int_{\R^N}|x|^N\left(\nabla^N
     \tu\right)\cdot\left(\nabla^N\log |x|\right)\leq
   \sqrt{\ell_N}\int_{\R^N}|\nabla^N\tu|=\ell_N \SNO\tu(0).
  \end{equation} 
It follows from \eqref{eq:32} that $\nabla^N
\tu(x)\parallel\nabla^N\log|x|$, a.e.~in $\R^N$. Since both
$|\nabla^N\log|x||$ and  $|\nabla^N\tu(x)|$ are radial (see
\rcor{cor:rad}), it follows that there exists a function $a(t)$
such that
\begin{equation}
  \label{eq:33}
  \nabla^N \tu(x)=a(|x|) \nabla^N \log |x|,\;\text{ a.e.~on }\R^N.
\end{equation}

\par For a smooth {\em radial } function $u$ on $\R^N$ we compute, introducing the variable $s=r^2/2=|x|^2/2$,
\begin{equation*}
  u_{x_i}=\left(\frac{du}{ds}\right)\,x_i,~u_{x_ix_i}=\left(\frac{d^2u}{ds^2}\right) x_i^2+\left(\frac{du}{ds}\right) ~\text{ and
  }~u_{x_ix_j}=\left(\frac{d^2u}{ds^2}\right)x_ix_j~(i\neq j).
\end{equation*}
Here and in the sequel, with a slight abuse of notation, we will
consider a radial function $u(x)$ also as a function of $s$.
 A simple induction  shows that for any multi-index
 $\alpha=(\alpha_1,\ldots,\alpha_N)$ with
 $|\alpha|=\sum_{j=1}^N\alpha_j=m$ we have
 \begin{equation}
   \label{eq:34}
   \frac{\partial^m u}{\partial x_1^{\alpha_1}\cdots \partial
     x_N^{\alpha_N}}(x)= 
\sum_{i=0}^{\left\lfloor{m/2}\right\rfloor} \left(\frac{d^{m-i}u}{d
  s^{m-i}}\right) P^{(\alpha)}_{m-2i}(x_1,\ldots,x_N)\,,
 \end{equation}
 where each $P^{(\alpha)}_{m-2i}$ is either an homogenous polynomial of
 degree $m-2i$ with positive integer coefficients, or the zero polynomial. It follows from \eqref{eq:34}
that the tensor $\nabla^mu$ can be written as
\begin{equation}
  \label{eq:35}
  \nabla^m u(x)=\sum_{i=0}^{\left\lfloor{m/2}\right\rfloor} \left(\frac{d^{m-i}u}{d
  s^{m-i}}\right) {\mathcal P}_{m-2i}(x_1,\ldots,x_N),
\end{equation}
 where each ${\mathcal P}_{m-2i}$ is a tensor whose nonzero elements
 are taken from the set 
\begin{equation*}
\{P^{(\alpha)}_{m-2i}\,:\,|\alpha|=m,0\leq
 i\leq \left\lfloor{m/2}\right\rfloor\}\,.
\end{equation*}
 We claim that none of the tensors  $\{{\mathcal
   P}_{m-2i}\}_{i=0}^{\left\lfloor{m/2}\right\rfloor}$ is the zero
 tensor. Indeed, this follows from the simple observation that for each fixed $j\in
 I_N$ we have
 \begin{equation*}
   \frac{\partial^m u}{\partial x_j^m}(x)=\sum_{i=0}^{\left\lfloor{m/2}\right\rfloor} b_{i}\left(\frac{d^{m-i}u}{d
  s^{m-i}}\right) x_j^{m-2i},
 \end{equation*}
 with positive integer coefficients $\{b_{i}\}$.
 Now we can rewrite \eqref{eq:35} as 
\begin{equation}
  \label{eq:36}
  \nabla^m u(x)=\sum_{i=0}^{\left\lfloor{m/2}\right\rfloor} \left(\frac{d^{m-i}u}{d
  s^{m-i}}\right) (2s)^{m/2-i}{\mathcal P}_{m-2i}(\tilde
x_1,\ldots,\tilde x_N),
\end{equation}
 with $(\tilde x_1,\ldots,\tilde x_N)=(1/|x|) (x_1,\ldots,x_N)\in{\mathbb S}^{N-1}$.
 Of course, the above formulas continue to hold when we replace the smooth $u$ by
 a function belonging to $W^{m,1}$. 
\par Going back to \eqref{eq:33}, using \eqref{eq:36} for $u=\tu$ and
$u=\log|x|$, we conclude that
\begin{equation}
\label{eq:37}
  0=\sum_{i=0}^{\left\lfloor{N/2}\right\rfloor} \left(\frac{d^{N-i}\tu(x)}{d
  s^{N-i}}-a(|x|) \frac{d^{N-i}\log|x|}{d
  s^{N-i}}\right) (2s)^{N/2-i}{\mathcal P}_{N-2i}(x/|x|),\quad\text{ a.e.~on }\R^N.
\end{equation}
Since for $i_1\neq i_2$ the monomials in the components of ${\mathcal P}_{N-2i_1}$ and
${\mathcal P}_{N-2i_2}$ have different degrees, it follows from
\eqref{eq:37} that
\begin{equation}
  \label{eq:38}
  \frac{d^{N-i}\tu(x)}{ds^{N-i}}=a(|x|) \frac{d^{N-i}\log|x|}{ds^{N-i}}\,,\quad\text{a.e.~on }\R^N,~i=0,\ldots, \left\lfloor{N/2}\right\rfloor.
\end{equation}
Using \eqref{eq:38} for $i=i_0:=\left\lfloor{N/2}\right\rfloor$ and
$i=i_0-1$ yields that $\tv:=\frac{d^{N-i_0}\tu}{ds^{N-i_0}}$ satisfies
\begin{equation}
  \label{eq:39}
  \frac{\frac{d\tv}{ds}}{\tv}=\frac{d^{N-i_0+1}\log|x|/ds^{N-i_0+1}}{d^{N-i_0}\log|x|/ds^{N-i_0}}=-\frac{N-i_0}{s},\quad\text{a.e.~for } s\in(0,\infty).
\end{equation}
 Integrating \eqref{eq:39} gives $\tv=\frac{c}{s^{N-i_0}}$ for some
   constant $c$, whence
   \begin{equation}
     \label{eq:40}
     \tu={\tilde c}\log s+Q_{N-i_0-1}(s),
   \end{equation}
 where $Q_{N-i_0-1}$ is a polynomial of degree less or equal to  $N-i_0-1$ and $\tilde c$ is another constant. For $\tu$ as
 in \eqref{eq:40}, the 
 requirements $\tu,|\nabla^N\tu|\in L^1(\R^N)$ clearly impose
 $\tu=0$. Contradiction.
\end{proof}
\begin{remark}
  It was shown in \cite{hn} that for $N=1$ the function
  $u(x)=e^{-|x|}$ satisfies $u(0)=\|u\|_\infty=(1/2)\int_\R|u'|$, that is, equality
  holds in \eqref{eq:64}. In fact, this is
  true for any $u\in W^{1,1}(\R)$ satisfying $\sgn u'(x)=-\sgn x$,
  a.e.~on $\R$.
\end{remark}

\end{document}